\documentclass[reqno,oneside]{amsart}

\usepackage[danish,english]{babel} 
\usepackage[latin1]{inputenc} 
\usepackage[T1]{fontenc} 
\usepackage{lmodern}
\usepackage{amsmath,amsthm,amssymb,amsfonts,mathrsfs,latexsym} 
\usepackage{graphicx} 
\usepackage{verbatim} 
\usepackage[all]{xy} 
\usepackage[colorlinks, linkcolor=blue, citecolor=blue, urlcolor=blue, hypertexnames=true]{hyperref}  
\usepackage{multirow}

\newtheorem{thm}{Theorem}
\newtheorem{lem}[thm]{Lemma}
\newtheorem{prop}[thm]{Proposition}

\theoremstyle{definition}

\newtheorem{rem}[thm]{Remark}

\newcommand{\mc}[1]{\mathcal{#1}}

\newcommand{\mf}[1]{\mathfrak{#1}}
\newcommand{\mr}[1]{\mathrm{#1}}

\newcommand{\C}{\mathbb{C}}

\newcommand{\R}{\mathbb{R}}
\newcommand{\Z}{\mathbb{Z}}
\newcommand{\la}{\langle}
\newcommand{\ra}{\rangle}
\renewcommand{\epsilon}{\varepsilon}
\renewcommand{\phi}{\varphi}
\renewcommand{\tilde}{\widetilde}

\newcommand{\acts}{\curvearrowright}
\DeclareMathOperator{\SL}{SL}
\DeclareMathOperator{\Aut}{Aut}
\newcommand{\WA}{\mr{WA}}

\parindent 0cm
\parskip 4pt

\begin{document}
\selectlanguage{english} 

\begin{abstract}
Following an approach of Ozawa, we show that several semidirect products are not weakly amenable. As a consequence, we are able to characterize the simply connected Lie groups that are weakly amenable.
\end{abstract}


\title{
\mbox{Weak amenability and simply connected Lie groups}
}
\thanks{The author is supported by ERC Advanced Grant no.~OAFPG 247321, the Danish National Research Foundation through the Centre for Symmetry and Deformation (DNRF92), and the Deutsche Forschungsgemeinschaft through the Collaborative Research Centre (SFB 878).}

\author{S{\o}ren Knudby}
\address{Mathematical Institute, University of M\"unster, \newline Einsteinstra\ss{}e 62, 48149 M\"unster, Germany.}
\email{knudby@uni-muenster.de}
\date{\today}
\maketitle

A locally compact group $G$ is weakly amenable if there is a net $(u_i)_{i\in I}$ of compactly supported Herz-Schur multipliers on $G$ converging to $1$ uniformly on compact subsets of $G$ and satisfying $\sup_i \|u_i\|_{B_2}\leq C$ for some $C\geq 1$ (see Section~\ref{sec:weak-amenability} for details). The infimum of those $C$ for which such a net exists is the \emph{weak amenability constant of $G$}, denoted here $\Lambda_\WA(G)$. Weak amenability was introduced by Cowling and Haagerup \cite{MR996553}.
By now, a lot is known about weak amenability, especially for (connected) Lie groups. Simple Lie groups are weakly amenable if and only if they have real rank at most one. The non-simple case was treated in \cite{MR2132866} in almost full generality (see Theorem~\ref{thm:finite-center} below).

A connected Lie group $G$ has a Levi decomposition $G = RS$ coming from a Levi decomposition of its Lie algebra $\mf g = \mf r\rtimes\mf s$. Here $\mf r$ is the solvable radical of $\mf g$ and $\mf s$ is a semisimple Lie algebra. The groups $R$ and $S$ are the connected Lie subgroups of $G$ associated with $\mf r$ and $\mf s$, respectively. The group $R$ is a closed normal solvable subgroup. The group $S$ is called a semisimple Levi factor of $G$ and is a semisimple Lie subgroup.
When $S$ has finite center, the authors of \cite{MR2132866} were able to completely characterize weak amenability of $G$.

\begin{thm}[\cite{MR2132866}]\label{thm:finite-center}
Let $G$ be a connected Lie group, and let $\mf g = \mf r\rtimes\mf s$ be a Levi decomposition of its Lie algebra. Let $S$ be the associated semisimple Levi factor and decompose the Lie algebra of $S$ into simple ideals as $\mf s = \mf s_1\oplus\cdots\oplus\mf s_n$. Suppose $S$ has finite center. The following are equivalent.
\begin{enumerate}
	\item $G$ is weakly amenable.
	\item For every $i = 1,\ldots,n$, one of the following holds:
	\begin{itemize}
		\item $\mf s_i$ has real rank zero;
		\item $\mf s_i$ has real rank one and $[\mf s_i,\mf r] = 0$.
	\end{itemize}
\end{enumerate}
In that case,
$$
\Lambda_\WA(G) = \prod_{i=1}^n\Lambda_\WA(S_i),
$$
where $S_i$ is the connected Lie subgroup of $G$ associated with $\mf s_i$.
\end{thm}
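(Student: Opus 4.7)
The strategy I would follow is to prove the two implications separately, exploiting that both weak amenability and the constant $\Lambda_\WA$ interact well with standard group-theoretic operations: passage to closed subgroups leaves $\Lambda_\WA$ bounded above, passage to a quotient by a closed amenable normal subgroup preserves $\Lambda_\WA$, and $\Lambda_\WA$ is multiplicative on direct products. The explicit formula for $\Lambda_\WA(G)$ will then fall out of the forward direction.

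\emph{The direction $(2)\Rightarrow(1)$.} I would partition the simple factors into $I_0 = \{i : \mr{rank}_\R(\mf s_i) = 0\}$ (each such $S_i$ is compact, since $S$ has finite center) and $I_1 = \{i : \mr{rank}_\R(\mf s_i) = 1 \text{ and } [\mf s_i,\mf r]=0\}$. Let $N$ be the closed subgroup generated by $R$ together with all $S_i$ for $i\in I_0$; this is an extension of a solvable group by a compact one, hence amenable, and it is normal in $G$ because the $S_i$ in question commute with the other factors while $R$ is already normal. Up to finite covering, the quotient $G/N$ is then a direct product $\prod_{i\in I_1} S_i$ of rank-one simple Lie groups with finite center, each of which is weakly amenable by the Cowling--Haagerup theory. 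Combining multiplicativity of $\Lambda_\WA$ over direct products with invariance under quotients by closed amenable normal subgroups yields the asserted formula, with the convention $\Lambda_\WA(S_i)=1$ for $i\in I_0$.

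\emph{The direction $(1)\Rightarrow(2)$.} I would argue the contrapositive, exhibiting in $G$ a closed subgroup or quotient that is known to fail weak amenability. If some $\mf s_i$ has real rank at least two, then $S_i$ modulo its finite center is a higher-rank simple Lie group, which is not weakly amenable by classical results; since weak amenability passes to such subquotients, $G$ is not weakly amenable either. The delicate case is the remaining one: some $\mf s_i$ has rank one but $[\mf s_i,\mf r]\neq 0$. Using solvability of $\mf r$ together with finite-dimensional $\mf s_i$-representation theory, one extracts from $\mf r$ a nontrivial irreducible $\mf s_i$-invariant vector subquotient $V$, producing a closed subquotient of $G$ of the form $V\rtimes \tilde S_i$ with $\tilde S_i$ locally isomorphic to $S_i$ and acting nontrivially on $V$.

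The main obstacle lies in this last step: one must show that every semidirect product $V\rtimes \tilde S_i$ arising from a nontrivial finite-dimensional representation of a rank-one simple Lie algebra fails to be weakly amenable. This generalises Haagerup's theorem that $\R^2\rtimes\SL(2,\R)$ is not weakly amenable, and handling the full list of rank-one simples ($\SL(2,\R)$, $\SL(2,\C)$, $\mr{Sp}(n,1)$, $F_{4(-20)}$, and their covers) together with arbitrary irreducible representations is the technical heart of the argument; once it is available, the theorem assembles from the two directions as above.
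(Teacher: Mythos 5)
Your overall architecture --- split off the non-compact rank-one part for $(2)\Rightarrow(1)$, argue the contrapositive via a bad subobject for $(1)\Rightarrow(2)$ --- is the same as in \cite{MR2132866} and in this paper's proof of the simply connected analogue (Theorem~\ref{thm:simply-connected}). But one step is justified by a false principle, and the reduction in the converse misidentifies both the model groups and the technical heart. In $(2)\Rightarrow(1)$ you pass from weak amenability of $G/N$ back to $G$ by ``invariance of $\Lambda_\WA$ under quotients by closed amenable normal subgroups.'' In the direction you need, this asserts that an extension of an amenable group by a weakly amenable group is weakly amenable, which is false --- $\Z^2\rtimes\SL(2,\Z)$ is the standard counterexample, and the failure of exactly this permanence property is the subject of the theorem. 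The fix is to go the other way: $S_{nc}=\prod_{i\in I_1}S_i$ is a \emph{closed} normal subgroup (closedness is where the finite-center hypothesis is used) with amenable quotient $R\rtimes S_c$, hence co-amenable, so $\Lambda_\WA(G)=\Lambda_\WA(S_{nc})$ by \eqref{eq:co-amenable}; equivalently, $[\mf s_{nc},\mf r\oplus\mf s_c]=0$ makes $G$ a direct product up to a finite central subgroup, and one applies \eqref{eq:product} and \eqref{eq:mod-compact}.

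In $(1)\Rightarrow(2)$ your reduction to ``a nontrivial irreducible vector subquotient $V$'' with $\tilde S_i$ an arbitrary rank-one group is off in both directions at once. On one hand, the minimal obstructions are not always vector groups: the correct dichotomy (Proposition~\ref{prop:dichotomy}) yields a closed subgroup locally isomorphic to $\R^{k+1}\rtimes\SL(2,\R)$ \emph{or} to $H_{2k+1}\rtimes\SL(2,\R)$, and the Heisenberg case cannot be flattened inside $G$ without quotienting the subgroup by a non-compact central copy of $\R$ --- which again requires the unavailable quotient permanence. On the other hand, you never need to treat $\mr{Sp}(n,1)$ or $F_{4(-20)}$ acting by arbitrary irreducible representations: every non-compact simple $\mf s_i$ contains a copy of $\mf{sl}_2$, and restricting the action on $\mf r$ to it reduces everything to $\SL(2,\R)$ and its covers; this is precisely the content of the structure theory in \cite{MR2132866} and \cite{cornulier-jlt}. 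Finally, the genuine technical heart --- that the two model families of Theorem~\ref{thm:no-cover} are not weakly amenable --- is exactly what you leave as a black box; it occupies \cite{MR1245415} and most of \cite{MR2132866}, and the point of the present paper is a short new proof of it via Ozawa's invariant-mean criterion (Theorem~\ref{thm:ozawa}) applied to the lattices $\Z^n\rtimes\SL(2,\Z)$ and $\Gamma_{2n+1}\rtimes\SL(2,\Z)$.
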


For any natural number $n\geq 1$, we let the group $\SL(2,\R)$ act on $\R^n$ by the unique irreducible representation of $\SL(2,\R)$ of dimension $n$. The group $\SL(2,\R)$ also acts on the Heisenberg group $H_{2n+1}$ of dimension $2n+1$ by fixing the center and acting on the vector space $\R^{2n}$ by the unique irreducible representation on $\R^{2n}$.

Apart from some structure theory, the proof of Theorem~\ref{thm:finite-center} relies on the following result whose proof occupies \cite{MR1245415} and the majority of \cite{MR2132866}.
\begin{thm}[\cite{MR2132866},\cite{MR1245415}]\label{thm:no-cover}
The following groups are not weakly amenable:
\begin{itemize}
	\item $\R^n\rtimes\SL(2,\R)$, where $n\geq 2$.
	\item $H_{2n+1}\rtimes\SL(2,\R)$, where $n\geq 1$.
\end{itemize}
\end{thm}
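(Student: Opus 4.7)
The plan is to follow Ozawa's approach: first, reduce the Heisenberg case to the abelian case; then, for $G=\R^n\rtimes\SL(2,\R)$ with $n\geq 2$, derive a contradiction from a hypothetical bounded net of Herz--Schur multipliers by Fourier analysis along the normal subgroup.

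\emph{Heisenberg reduction.} The $\SL(2,\R)$-action on $H_{2n+1}$ fixes the one-dimensional center $Z\cong\R$, so $Z$ is a closed central subgroup of $H_{2n+1}\rtimes\SL(2,\R)$ whose quotient is $\R^{2n}\rtimes\SL(2,\R)$ carrying the irreducible action of dimension $2n$. Weak amenability is non-increasing under quotients by closed central subgroups (and more generally by closed amenable normal subgroups), as can be verified by averaging compactly supported multipliers against an invariant mean. Hence it suffices to show that $\R^n\rtimes\SL(2,\R)$ is not weakly amenable for every $n\geq 2$.

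\emph{Main argument.} Suppose for contradiction that $G = \R^n\rtimes\SL(2,\R)$ is weakly amenable with constant $\Lambda$, and fix a net $(u_i)$ of compactly supported Herz--Schur multipliers on $G$ with $\|u_i\|_{B_2}\leq\Lambda$ and $u_i\to 1$ uniformly on compacta. Restricting each $u_i$ to the normal subgroup $\R^n$ yields $v_i := u_i|_{\R^n}$ of Herz--Schur norm at most $\Lambda$ on $\R^n$, which identifies with the Fourier transform of a complex Radon measure $\mu_i$ on $\hat\R^n$ of total variation at most $\Lambda$. The $\SL(2,\R)$-action on $G$ by inner automorphisms preserves $B_2$-norms, so conjugating $u_i$ by any $s\in\SL(2,\R)$ yields another Herz--Schur multiplier whose restriction to $\R^n$ has Fourier transform $s_\ast\mu_i$, the pushforward of $\mu_i$ under the dual action of $s$ on $\hat\R^n$.

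\emph{The obstruction.} The contradiction is extracted from the orbit structure of the irreducible $n$-dimensional representation: for $n\geq 2$, every non-trivial $\SL(2,\R)$-orbit on $\hat\R^n$ is non-compact, the stabilizer of a generic point is a non-compact subgroup of $\SL(2,\R)$, and the only $\SL(2,\R)$-invariant finite Radon measure on $\hat\R^n$ is supported at $0$. Combining this with the covariance of the $\mu_i$ and the requirement $v_i\to 1$ on compacta forces, after suitable averaging or passage to a weak-$\ast$ limit, a quantitative incompatibility with the uniform bound $\|\mu_i\|\leq\Lambda$. I expect the hardest step to be producing this quantitative estimate cleanly. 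This is the heart of Ozawa's technique, and in our setting it should reduce to producing an explicit test function or test multiplier on $G$ whose pairing with the $u_i$ grows without bound, contradicting $\|u_i\|_{B_2}\leq\Lambda$.
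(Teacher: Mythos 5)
There are two genuine gaps here. First, the ``Heisenberg reduction'' is unjustified: weak amenability is only known to pass to quotients by \emph{compact} normal subgroups (equation \eqref{eq:mod-compact}), and the center of $H_{2n+1}$ is a non-compact copy of $\R$. Your proposed justification---averaging a compactly supported multiplier against an invariant mean on the center---actually produces the zero function, since an invariant mean on $\R$ annihilates $C_0(\R)$ and $z\mapsto u(gz)$ is compactly supported on each coset; replacing the mean by F{\o}lner averages instead destroys well-definedness on the quotient. That quotients by non-compact amenable (even central) closed normal subgroups preserve weak amenability is not available: note the author explicitly states he cannot remove the simple-connectedness hypothesis from Theorem~\ref{thm:simply-connected}, which would be immediate if one could quotient by discrete central subgroups. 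The paper avoids any quotient and treats the Heisenberg case on the same footing as the vector case, passing to the lattice $\Gamma_{2n+1}\rtimes\SL(2,\Z)$ and taking $N_0$ to be the center of $\Gamma_{2n+1}$ rather than $\{0\}$.

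Second, and more seriously, the ``main argument'' cannot extract a contradiction from the restrictions $v_i=u_i|_{\R^n}$ alone. Since $\R^n$ is amenable, the restricted data are perfectly consistent: compactly supported positive definite Fej\'er-type kernels already realize measures $\mu_i$ on $\hat{\R^n}$ with $\|\mu_i\|\leq 1$ and $\hat\mu_i\to1$ uniformly on compacta, and nothing forces a weak-$*$ limit of the $\mu_i$ to be $\SL(2,\R)$-invariant or to retain mass away from $0$ (the mass simply accumulates at $0$ or escapes to infinity, with no conflict with the orbit structure). The actual content of Ozawa's method is that weak amenability of the \emph{whole} group $N\rtimes H$ with $N$ amenable yields a mean on $\ell^\infty(N)$ that is simultaneously $N$-translation-invariant and conjugation-$H$-invariant (\cite[Theorem~A]{MR2914879}); this uses the two-variable kernel structure of the $u_i$ on all of $G$, not their restriction to $N$. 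Given such a mean, the paper's contradiction is soft and needs no quantitative estimate: stabilizers of nonzero (resp.\ non-central) points are proper real algebraic subgroups of $\SL(2,\R)$, hence amenable, so a conjugation-invariant mean charging $N\setminus N_0$ would induce an invariant mean on $\ell^\infty(\SL(2,\Z))$, contradicting non-amenability; therefore the mean concentrates on $N_0$, contradicting translation invariance. The ``quantitative incompatibility'' you defer to the end is not a cleanup step but the missing heart of the argument, and the route you sketch for it (Fourier analysis of the restrictions) does not lead there.
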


In this article, using a recent result of Ozawa \cite{MR2914879} about weakly amenable groups, we are able to give a new (and much simpler) proof of Theorem~\ref{thm:no-cover}. Ozawa already noted in \cite{MR2914879} that his result gave a new proof of the non-weak amenability of $\Z^2\rtimes\SL(2,\Z)$, which immediately implies non-weak amenability of $\R^2\rtimes\SL(2,\R)$.

In the study of weak amenability and related properties for Lie groups, the simply connected Lie groups are often more challenging to handle than for instance the simple Lie groups with finite center (see e.g. \cite{MR1418350,MR3047470,delaat2,HKdL-Tstar,MR1079871}). This is partly due to the fact that such groups are often not matrix groups and thus more difficult to describe explicitly. In this article, we completely settle the weak amenability question for simply connected Lie groups (Theorem~\ref{thm:simply-connected}).

We show that the universal covering groups of the groups appearing in Theorem~\ref{thm:no-cover} are not weakly amenable. Let $\tilde\SL(2,\R)$ denote the universal covering group of $\SL(2,\R)$. The group $\tilde\SL(2,\R)$ acts on $\R^n$ and $H_{2n+1}$ through the actions of $\SL(2,\R)$. We prove the following:

\begin{thm}\label{thm:cover}
The following groups are not weakly amenable:
\begin{itemize}
	\item $\R^n\rtimes\tilde\SL(2,\R)$, where $n\geq 2$.
	\item $H_{2n+1}\rtimes\tilde\SL(2,\R)$, where $n\geq 1$.
\end{itemize}
\end{thm}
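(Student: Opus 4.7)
The plan is to mimic the Ozawa-style proof of Theorem~\ref{thm:no-cover} advertised in the introduction, noting that the kernel of the covering map $\tilde\SL(2,\R)\to\SL(2,\R)$ lies in the center of $\tilde\SL(2,\R)$ and acts trivially on both $\R^n$ and $H_{2n+1}$. Consequently, the actions of $\tilde\SL(2,\R)$ on these groups factor through the corresponding $\SL(2,\R)$-actions, and every dynamical ingredient feeding into Ozawa's criterion from \cite{MR2914879} (non-existence of invariant probability measures on the dual, non-amenability of the acting group, and so on) is inherited unchanged from the non-cover situation.

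For the vector case $\R^n\rtimes\tilde\SL(2,\R)$ with $n\geq 2$, the most direct route is to pass to a lattice, since Ozawa's criterion is formulated in the discrete setting. The preimage $\Gamma$ of $\SL(2,\Z)$ under $\tilde\SL(2,\R)\to\SL(2,\R)$ is a discrete central $\Z$-extension of $\SL(2,\Z)$ and a lattice in $\tilde\SL(2,\R)$; the subgroup $\Z^n\rtimes\Gamma$ of $\R^n\rtimes\tilde\SL(2,\R)$ is closed. Since weak amenability passes to closed subgroups with the same constant, it suffices to show that $\Z^n\rtimes\Gamma$ is not weakly amenable. The verification of Ozawa's hypotheses is essentially identical to their verification for $\Z^n\rtimes\SL(2,\Z)$: the action of $\Gamma$ on $\Z^n$ factors through $\SL(2,\Z)$, and any non-amenable free subgroup of $\SL(2,\Z)$ lifts to a non-amenable subgroup of $\Gamma$ (free groups are projective, so the lift is isomorphic to the original).

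For the Heisenberg case $H_{2n+1}\rtimes\tilde\SL(2,\R)$ with $n\geq 1$, I would reduce to the vector case. Since $\tilde\SL(2,\R)$ fixes the center $Z\cong\R$ of $H_{2n+1}$, the subgroup $Z$ is closed, normal, and in fact central in $H_{2n+1}\rtimes\tilde\SL(2,\R)$, and the corresponding quotient is $\R^{2n}\rtimes\tilde\SL(2,\R)$ with $2n\geq 2$. Using the standard fact that weak amenability passes to quotients by closed normal amenable subgroups with the same constant (averaging compactly supported Herz--Schur multipliers against a F{\o}lner net in $Z$ and passing to a weak-$*$ limit), this reduces the Heisenberg case to the vector case already handled.

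The main obstacle is the bookkeeping around the central extension in Ozawa's discrete criterion: one must verify that the relevant subgroup- and dynamical hypotheses persist after lifting from $\SL(2,\R)$ to $\tilde\SL(2,\R)$. Here the projectivity of free groups (ensuring non-amenable witnesses lift to non-amenable subgroups of $\Gamma$) together with triviality of the covering kernel on the abelian part (ensuring the dynamical hypotheses are unchanged) do the real work. Once these observations are in place, the argument runs parallel to the new proof of Theorem~\ref{thm:no-cover}, with no fundamentally new dynamical input.
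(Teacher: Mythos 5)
Your treatment of the vector case is essentially the paper's argument: pass to the closed subgroup $\Z^n\rtimes\tilde\SL(2,\Z)$ and apply Ozawa's criterion (Theorem~\ref{thm:ozawa}) with $N_0=\{0\}$. (One small point there: for $n>2$ the action is the irreducible $n$-dimensional representation, not the standard one, so amenability of the stabilizer of a nonzero lattice vector is not quite ``identical'' to the $\Z^2\rtimes\SL(2,\Z)$ case; the paper handles it via Lemma~\ref{lem:proper}, i.e.\ Whitney's theorem that a proper real algebraic subgroup of $\SL(2,\R)$ has finitely many components and solvable identity component.)

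The Heisenberg case, however, contains a genuine gap. You reduce $H_{2n+1}\rtimes\tilde\SL(2,\R)$ to $\R^{2n}\rtimes\tilde\SL(2,\R)$ by quotienting out the center $Z\cong\R$ of $H_{2n+1}$, invoking ``the standard fact that weak amenability passes to quotients by closed normal amenable subgroups with the same constant.'' This is not a standard fact; the only quotient statement available is \eqref{eq:mod-compact}, for \emph{compact} normal subgroups, and whether weak amenability passes to quotients by non-compact amenable normal subgroups is not known. Your proposed averaging argument fails concretely: if $u$ is a compactly supported Herz--Schur multiplier on $G$ and $F_i$ is a F{\o}lner net in $Z\cong\R$, then $\frac{1}{|F_i|}\int_{F_i}u(gz)\,dz\to 0$ pointwise because $u$ vanishes off a compact set while $|F_i|\to\infty$ (equivalently, an invariant mean on $\R$ annihilates $C_0(\R)$), so the weak-$*$ limit is $0$ rather than a net converging to $1$ on compacta. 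This obstruction is exactly why the Heisenberg groups require separate treatment here and in \cite{MR1245415,MR2132866}. The paper instead constructs an $\SL(2,\Z)$-invariant lattice $\Gamma_{2n+1}\subseteq H_{2n+1}$ (a rescaled integral Heisenberg group) and applies Theorem~\ref{thm:ozawa} directly to $\Gamma_{2n+1}\rtimes\tilde\SL(2,\Z)$ with $N=\Gamma_{2n+1}$ and $N_0$ equal to the \emph{center} of $\Gamma_{2n+1}$: the whole point of allowing a proper invariant subgroup $N_0$ in the criterion is to absorb the central elements, whose stabilizer is all of $H$ and hence non-amenable, while every non-central element still has amenable stabilizer by the algebraic-subgroup lemma. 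Your proof needs this (or some other genuine treatment of the center) to go through.
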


As an application of Theorem~\ref{thm:cover}, we are able to characterize weak amenability for all simply connected Lie groups.

\begin{thm}\label{thm:simply-connected}
Let $G$ be a connected, simply connected Lie group, and let $\mf g = \mf r\rtimes\mf s$ be a Levi decomposition of its Lie algebra. Let $S$ be the associated semisimple Levi factor and decompose the Lie algebra of $S$ into simple ideals as $\mf s = \mf s_1\oplus\cdots\oplus\mf s_n$. The following are equivalent.
\begin{enumerate}
	\item $G$ is weakly amenable.
	\item For every $i = 1,\ldots,n$, one of the following holds:
	\begin{itemize}
		\item $\mf s_i$ has real rank zero;
		\item $\mf s_i$ has real rank one and $[\mf s_i,\mf r] = 0$.
	\end{itemize}
\end{enumerate}
In that case,
$$
\Lambda_\WA(G) = \prod_{i=1}^n\Lambda_\WA(S_i),
$$
where $S_i$ is the connected Lie subgroup of $G$ associated with $\mf s_i$.
\end{thm}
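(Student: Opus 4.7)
The plan is to follow the scheme of Theorem~\ref{thm:finite-center}, invoking Theorem~\ref{thm:cover} in place of Theorem~\ref{thm:no-cover} whenever the universal cover of $\SL(2,\R)$ appears. Three standard facts about $\Lambda_\WA$ will be used repeatedly: it is non-increasing under passage to closed subgroups; it is unchanged under extension by amenable normal subgroups; and it is multiplicative on direct products.

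For $(2) \Rightarrow (1)$ and the formula, I structurally reduce $G$ to a direct product. Set $\mf s_0 = \bigoplus_{\mr{rank}(\mf s_i) = 0} \mf s_i$ and $\mf s' = \bigoplus_{\mr{rank}(\mf s_i) = 1} \mf s_i$. Hypothesis (2) gives $[\mf s', \mf r] = 0$, and $[\mf s', \mf s_0] = 0$ holds because $\mf s_0$ and $\mf s'$ are sums of distinct simple ideals, so $\mf g = (\mf r \rtimes \mf s_0) \oplus \mf s'$ as Lie algebras. Since $G$ is simply connected, this integrates to $G \cong (R \rtimes S_0) \times S'$ with $S_0$ compact. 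The factor $R \rtimes S_0$ is amenable (solvable by compact), so $\Lambda_\WA(R \rtimes S_0) = 1$. For each simply connected real-rank-one simple $S_i$, the center $Z(S_i)$ is amenable (finite, or $\Z$ in the Hermitian case), so $\Lambda_\WA(S_i) = \Lambda_\WA(S_i/Z(S_i))$, the latter computed by Theorem~\ref{thm:finite-center} applied to the adjoint group. Multiplicativity across the direct product then gives $\Lambda_\WA(G) = \prod_i \Lambda_\WA(S_i)$, the rank-zero factors contributing $1$.

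For $(1) \Rightarrow (2)$, assume $G$ is weakly amenable and fix $i$. The analytic subgroup $S_i$ is closed in $G$ (because $G$ is simply connected, so the Levi factor is closed and splits as a direct product of its simply connected simple pieces), hence its adjoint quotient $S_i/Z(S_i)$ is weakly amenable, and Theorem~\ref{thm:finite-center} applied to it (trivial radical, trivial center) forces $\mr{rank}(\mf s_i) \leq 1$. It remains to exclude the case $\mr{rank}(\mf s_i) = 1$ with $[\mf s_i, \mf r] \neq 0$. Here I reuse the structural argument of \cite{MR2132866}: $\mf s_i$ contains an $\mf{sl}(2,\R)$-triple coming from its restricted root system, and a non-trivial irreducible component of the representation $\mf s_i \acts \mf r$, descended through the derived series of $\mf r$, yields an $\mf{sl}(2,\R)$-invariant subspace $V \subset \mf r$, either abelian or of Heisenberg type, on which the $\mf{sl}(2,\R)$ acts by a non-trivial irreducible representation. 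The resulting closed subgroup of $G$ (modulo, if necessary, a discrete central amenable part) is isomorphic either to $\R^n \rtimes \tilde\SL(2,\R)$ with $n \geq 2$ or to $H_{2n+1} \rtimes \tilde\SL(2,\R)$ with $n \geq 1$, or to the analogous group with $\tilde\SL(2,\R)$ replaced by $\SL(2,\R)$; either possibility contradicts Theorem~\ref{thm:cover} or Theorem~\ref{thm:no-cover}.

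I expect the delicate step to be this last one. The subalgebra $V \rtimes \mf{sl}(2,\R) \subset \mf g$ is routine to locate, but one must track which cover of $\SL(2,\R)$ integrates its $\mf{sl}(2,\R)$-part inside $G$, and verify that the analytic subgroup (after passing to its closure and, if needed, quotienting by an amenable normal part) genuinely lies on the list of Theorems~\ref{thm:no-cover} and \ref{thm:cover}. This is precisely where the new Theorem~\ref{thm:cover} is indispensable: the universal cover $\tilde\SL(2,\R)$ genuinely arises in the simply connected setting, and the previous non-cover results would be unable to rule it out.
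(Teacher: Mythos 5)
Your overall architecture is the same as the paper's: split off the non-compact Levi factors for the positive direction, and for the negative direction locate a closed subgroup of $G$ covered by $\R^{k+1}\rtimes\tilde\SL(2,\R)$ or $H_{2k+1}\rtimes\tilde\SL(2,\R)$ and feed it to Theorems~\ref{thm:no-cover} and~\ref{thm:cover}. However, the argument as written rests on a false lemma, namely your second ``standard fact'' that $\Lambda_\WA$ ``is unchanged under extension by amenable normal subgroups.'' Only the compact-kernel case \eqref{eq:mod-compact} is available; the general amenable case fails spectacularly, and this paper is built on its failure: $\R^n$ is an amenable closed normal subgroup of $\R^n\rtimes\SL(2,\R)$ with weakly amenable quotient $\SL(2,\R)$, yet the extension is not weakly amenable. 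You invoke this fact at three points. (i) In $(2)\Rightarrow(1)$ you compute $\Lambda_\WA(S_i)$ for a simply connected rank-one factor by passing to the adjoint group; when $\mf s_i = \mf{su}(n,1)$ the center is infinite cyclic and non-compact, and the equality $\Lambda_\WA(S_i) = 2n-1$ for the universal cover of $\mathrm{SU}(n,1)$ is precisely Hansen's theorem \cite{MR1079871} --- a substantial result, not a formal consequence of amenability of the center. The paper cites \cite{MR996553} and \cite{MR1079871} at exactly this point. (ii) In $(1)\Rightarrow(2)$, for a factor of real rank at least two you pass from $S_i$ to $S_i/Z(S_i)$; again $Z(S_i)$ may be infinite, and weak amenability is not known to descend along such quotients. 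The detour is unnecessary: \cite[Theorem~1]{MR1418350} applies to all simple Lie groups of real rank at least two with no hypothesis on the center, so one applies \eqref{eq:subgroup} directly to the closed subgroup $S_i$.

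(iii) The step you yourself flag as delicate --- pinning down the closed subgroup $H$ that integrates $V\rtimes\mf{sl}_2$ ``modulo, if necessary, a discrete central amenable part'' --- is left unresolved, and quotienting by an amenable central part is not a move you are entitled to. What actually closes the gap (Lemma~\ref{lem:levi-quotient} in the paper) is the observation that $H$ is the quotient of $\tilde H = R_0\rtimes\tilde\SL(2,\R)$ by a central subgroup $Z$ contained in the (unique) Levi factor, and the only possibilities are $Z$ trivial, where Theorem~\ref{thm:cover} applies directly, or $Z$ of finite index in the center of $\tilde\SL(2,\R)$, where $H$ is a finite covering of $\R^{k+1}\rtimes\SL(2,\R)$ or $H_{2k+1}\rtimes\SL(2,\R)$ and Theorem~\ref{thm:no-cover} together with \eqref{eq:mod-compact} (compact kernel) finishes. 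Note finally the internal tension in your write-up: if your second standard fact were true, Theorem~\ref{thm:cover} would be an immediate corollary of Theorem~\ref{thm:no-cover} by quotienting out the central copy of $\Z$, contradicting your own closing remark that Theorem~\ref{thm:cover} is indispensable.
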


We expect that Theorem~\ref{thm:simply-connected} also holds without the assumption of simple connectedness, but we have not been able to establish this.

\section{Weak amenability and semidirect products}\label{sec:weak-amenability}
Let $G$ be a locally compact group. A complex, continuous function $u\colon G\to\C$ is a \emph{Herz-Schur multiplier} if there are a Hilbert space $\mc H$ and two bounded continuous functions $P,Q\colon G\to\mc H$ such that
$$
u(y^{-1}x) = \la P(x),Q(y)\ra  \quad\text{for every } x,y\in G.
$$
The Herz-Schur norm of $u$ is $\|u\|_{B_2} = \inf\{\|P\|_\infty\|Q\|_\infty\}$, where the infimum is taken over all $P,Q$ as above. There are other well-known descriptions of Herz-Schur multipliers \cite{MR753889}, \cite{MR1180643}, \cite[Theorem~5.1]{MR1818047}.

Recall that the group $G$ is weakly amenable if there is a net $(u_i)_{i\in I}$ of compactly supported Herz-Schur multipliers on $G$ converging to $1$ uniformly on compact subsets of $G$ and satisfying $\sup_i \|u_i\|_{B_2}\leq C$ for some $C\geq 1$. The infimum of those $C$ for which such a net exists is denoted $\Lambda_\WA(G)$, with the understanding that $\Lambda_\WA(G) = \infty$ if $G$ is not weakly amenable.
We refer to \cite[Section~12]{MR2391387} for a nice introduction to weak amenability. We list below the behaviour of the weak amenability constant under some relevant group constructions (see e.g. \cite[Section~1]{MR996553} and \cite{Jolissaint-proper}). These results will be needed in the proof of Theorem~\ref{thm:simply-connected}.

When $K$ is a compact normal subgroup of $G$,
\begin{align}\label{eq:mod-compact}
\Lambda_\WA(G/K) = \Lambda_\WA(G).
\end{align}

For a closed subgroup $H$ of $G$, 
\begin{align}\label{eq:subgroup}
\Lambda_\WA(H) \leq \Lambda_\WA(G),
\end{align}
and if $H$ is moreover co-amenable in $G$ (and $G$ is second countable), equality holds:
\begin{align}\label{eq:co-amenable}
\Lambda_\WA(H) = \Lambda_\WA(G).
\end{align}

For any two locally compact groups $G$ and $H$,
\begin{align}\label{eq:product}
\Lambda_\WA(G\times H)=\Lambda_\WA(G)\Lambda_\WA(H).
\end{align}

The following theorem is the basis for proving Theorems~\ref{thm:no-cover} and~\ref{thm:cover}. It relies on Ozawa's work \cite{MR2914879} using the technique in \cite[Corollary~2.12]{MR2680430} (see also \cite[Corollary~12.3.7]{MR2391387}). In \cite{MR2914879}, Ozawa proves that if a weakly amenable group $G$ has an amenable closed normal subgroup $N$, then there is a state on $L^\infty(N)$ which is both left $N$-invariant and conjugation $G$-invariant.

\begin{thm}\label{thm:ozawa}
Let $H\acts N$ be an action by automorphisms of a discrete group $H$ on a discrete group $N$, and let $G = N\rtimes H$ be the corresponding semidirect product group. Let $N_0$ be a proper subgroup of $N$. Suppose
\begin{enumerate}
	\item $H$ is not amenable;
	\item $N$ is amenable;
	\item $N_0$ is $H$-invariant;
	\item For every $x\in N\setminus N_0$, the stabilizer of $x$ in $H$ is amemable.
\end{enumerate}
Then $G$ is not weakly amenable.
\end{thm}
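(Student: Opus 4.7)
My plan is to argue by contradiction and reduce to Ozawa's theorem. Assume that $G = N\rtimes H$ is weakly amenable. Since $N$ is normal and amenable in $G$, the version of Ozawa's result recalled just above the statement supplies a state $\phi$ on $\ell^\infty(N)$ that is invariant under both the left-translation action of $N$ and the conjugation action of $G$; in particular $\phi$ is invariant under the $H$-action on $N$ coming from the semidirect product.

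I would first check that $A := \phi(\mathbf{1}_{N\setminus N_0}) > 0$. Since $N_0$ is a proper subgroup, pick $n\in N\setminus N_0$; then $N_0$ and $nN_0$ are disjoint subsets of $N$, left $N$-invariance forces $\phi(\mathbf{1}_{N_0}) = \phi(\mathbf{1}_{nN_0})$, and hence $2\phi(\mathbf{1}_{N_0})\leq 1$, yielding $A\geq 1/2$. Hypothesis~(3) makes $N\setminus N_0$ an $H$-invariant subset of $N$, so the normalization $\psi(f) := A^{-1}\phi(f\cdot\mathbf{1}_{N\setminus N_0})$ defines an $H$-invariant mean $\psi$ on $\ell^\infty(N\setminus N_0)$.

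The final and main step is to deduce from $\psi$ and hypothesis~(4) that $H$ itself is amenable, contradicting~(1). This is the classical ``induction of means'' principle: decompose $N\setminus N_0$ into $H$-orbits, and for each orbit $\Omega$ choose a base point $x_\Omega\in\Omega$, for every $y\in\Omega$ a lift $h_y\in H$ with $h_y\cdot x_\Omega = y$, and an invariant mean $m_\Omega$ on the amenable group $H_{x_\Omega}$. Setting $g_f(y) := m_\Omega(k\mapsto f(h_y k))$ for $f\in\ell^\infty(H)$ and $y\in\Omega$, one verifies that $g_f$ is a well-defined element of $\ell^\infty(N\setminus N_0)$ (left-invariance of $m_\Omega$ absorbs the $H_{x_\Omega}$-ambiguity in the choice of $h_y$), and that $\mu(f) := \psi(g_f)$ is then an $H$-invariant mean on $H$, where the invariance uses that $y\mapsto h_0^{-1}y$ agrees, up to a right $H_{x_\Omega}$-translate, with replacing $h_y$ by $h_0^{-1}h_y$. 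I expect this last construction to be the main technical burden --- not because any of it is deep, but because the bookkeeping around orbits, sections and coset ambiguity is where all four hypotheses of the theorem are actually brought to bear.
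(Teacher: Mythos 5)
Your proposal is correct and takes essentially the same route as the paper: both invoke Ozawa's Theorem~A to obtain a mean on $\ell^\infty(N)$ invariant under left $N$-translation and $H$-conjugation, use properness of $N_0$ together with $N$-invariance to see that the mean gives positive mass to $N\setminus N_0$, and then induce an $H$-invariant mean on $\ell^\infty(H)$ via the orbit decomposition of $N\setminus N_0$ and the amenable stabilizers, contradicting non-amenability of $H$. The only differences are cosmetic (you make the positivity bound $\phi(\mathbf{1}_{N\setminus N_0})\geq 1/2$ explicit at the start, whereas the paper derives the same contradiction at the end).
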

\begin{proof}
We suppose that $G$ is weakly amenable and derive a contradiction. By \cite[Theorem~A]{MR2914879}, there is an $N$-invariant mean $\mu$ on $\ell^\infty(N)$ which is moreover $H$-invariant, where $H$ acts on $N$ by conjugation.

Since $N_0$ is $H$-invariant, the action $H\acts N$ restricts to an action $H\acts N\setminus N_0$. Let $S$ be a system of representatives for the $H$-orbits in $N\setminus N_0$. For any $x\in S$, let
$$
H_x = \{ h\in H \mid h.x = x\}
$$
be the stabilizer subgroup of $x$ in $H$. We make the following identification of $H$-sets,
$$
N = N_0 \sqcup\bigsqcup_{x\in S} H/H_x.
$$

The stabilizer subgroup $H_x$ is amenable by assumption, so we may choose a left $H_x$-invariant mean $\mu_x$ on $\ell^\infty(H_x)$. Define $\phi_x\colon\ell^\infty(H)\to\ell^\infty(H/H_x)$ by averaging by $\mu_x$, that is,
$$
\phi_x(f)(h H_x) = \int_{H_x} f(h y) \ d\mu_x(y), \qquad f\in\ell^\infty(H).
$$
Then $\phi_x$ is unital, positive and $H$-equivariant. Collecting these maps, we obtain a unital, positive, $H$-equivariant map $\ell^\infty(H)\to\ell^\infty(N\setminus N_0)$. Since $H$ is not amenable, the $H$-invariant mean $\mu$ is concentrated on $N_0$. But this contradicts the fact that $\mu$ is also $N$-invariant.
\end{proof}

\section{Some semidirect product groups}\label{sec:semidirect}

For any natural number $n\geq 1$, the group $\SL(2,\R)$ has a unique irreducible representation on $\R^n$ (see \cite[p.~107]{MR0430163}). It is described explicitly in \cite[p.~710]{MR1245415}. The semidirect product $\R^n\rtimes\SL(2,\R)$ is defined using this representation. It is clear from the explicit description of the action in \cite[p.~710]{MR1245415} that $\SL(2,\Z)$ leaves the integer lattice $\Z^n$ invariant so that $\Z^n\rtimes\SL(2,\Z)$ is a well-defined subgroup of $\R^n\rtimes\SL(2,\R)$.

Let $H_{2n+1}$ denote the real Heisenberg group of dimension $2n+1$. We realize the Heisenberg group as $\R^{2n}\times\R$ with group multiplication given by
$$
(u_1,t_1)(u_2,t_2) = (u_1 + u_2, t_1 + t_2 + \la u_1, J u_2\ra)
$$
where $J$ is the symplectic $2n\times 2n$ matrix defined by
$$
J_{ij} = \left\{\begin{array}{ll}
	(-1)^j & \text{if } i+j = 2n+1,\\
	0 & \text{otherwise.}
\end{array}\right.
$$
For $j = 1,\ldots,2n$, let
$$
\alpha_j = \binom{2n-1}{j-1}^{1/2}.
$$
The irreducible representation $Z$ of $\SL(2,\R)$ of dimension $2n$ can be realized (in a different way than above) as
$$
Z(A)_{ij} = \sum_{l=0}^{2n} \binom{j-1}{l} \binom{2n-j}{2n-i-l} \alpha_i^{-1}\alpha_j a^{2n-i-l} b^{l} c^{i+l-j} d^{j-l-1}
$$
where
$$
A = \begin{pmatrix}
	a & b \\
	c & d
\end{pmatrix} \in \SL(2,\R).
$$
We refer to \cite[Section~2.1]{MR2132866} for more details. In \cite{MR2132866}, it is shown that the map $\bar Z\colon\SL(2,\R)\to\Aut(H_{2n+1})$ given by
$$
\bar Z(A)(u,t) = (Z(A)u,t), \qquad A\in\SL(2,\R),\ (u,t)\in H_{2n+1},
$$
defines an action by automorphisms of $\SL(2,\R)$ on $H_{2n+1}$. It is with respect to the action $\bar Z$ that we define the semidirect product $H_{2n+1}\rtimes\SL(2,\R)$.

Consider the lattice $\Lambda_{2n} = \alpha_1^{-1}\Z \oplus\cdots\oplus \alpha_{2n}^{-1}\Z$ in $\R^{2n}$ and let
\begin{align*}
\Gamma_{2n+1} = \left\{(u,t)\in H_{2n+1} \mid u\in\Lambda_{2n},\ t\in \frac{1}{N}\Z \right\},
\end{align*}
where $N = \alpha_1^2\cdots\alpha_{2n}^2$.
\begin{lem}
$\Gamma_{2n+1}$ is a discrete subgroup of $H_{2n+1}$ which is invariant under the action of $\SL(2,\Z)$.
\end{lem}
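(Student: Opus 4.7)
The plan is to verify three things: $\Gamma_{2n+1}$ is closed under the Heisenberg group operations, it is discrete in $H_{2n+1}$, and it is preserved by $\bar Z(A)$ for every $A\in\SL(2,\Z)$. Discreteness is immediate, since $\Gamma_{2n+1}=\Lambda_{2n}\times\tfrac{1}{N}\Z$ inside the underlying manifold $\R^{2n}\times\R$ of $H_{2n+1}$, and both factors are discrete subgroups.

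For the subgroup property, I would first observe that $J$ is antisymmetric: its only nonzero entries are $J_{i,2n+1-i}=(-1)^{2n+1-i}$ and $J_{2n+1-i,i}=(-1)^i$, which are negatives of each other. Hence $\la u,Ju\ra=0$, so $(u,t)^{-1}=(-u,-t)$, which clearly lies in $\Gamma_{2n+1}$ whenever $(u,t)$ does. The heart of the verification is closure under multiplication, which reduces to showing that the cocycle $\la u_1,Ju_2\ra$ lies in $\tfrac{1}{N}\Z$ when $u_1,u_2\in\Lambda_{2n}$. Writing $u_k$ in the basis $\{\alpha_j^{-1}e_j\}$, this cocycle is a sum of terms of the shape $\pm m_i\, n_{2n+1-i}/(\alpha_i\alpha_{2n+1-i})$ with $m_i,n_j\in\Z$. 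The key identity $\alpha_i\alpha_{2n+1-i}=\binom{2n-1}{i-1}$, which follows from $\binom{2n-1}{i-1}=\binom{2n-1}{2n-i}$, shows that each denominator divides $N=\prod_k\binom{2n-1}{k-1}$, and the desired membership follows.

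For $\SL(2,\Z)$-invariance, since $\bar Z(A)$ fixes the central coordinate it suffices to show $Z(A)\Lambda_{2n}\subseteq\Lambda_{2n}$. Given $u\in\Lambda_{2n}$ with $u_j=n_j\alpha_j^{-1}$, $n_j\in\Z$, I would compute $\alpha_i(Z(A)u)_i$ directly from the explicit formula for $Z(A)_{ij}$; the factors $\alpha_i^{\pm 1}$ cancel, leaving a sum of integer binomial coefficients times monomials in $a,b,c,d$. The crucial observation is that whenever both $\binom{j-1}{l}$ and $\binom{2n-j}{2n-i-l}$ are nonzero one has $0\le l\le j-1$ and $j-i\le l\le 2n-i$, which forces all four exponents $2n-i-l$, $l$, $i+l-j$, $j-l-1$ to be nonnegative. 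Hence $\alpha_i(Z(A)u)_i$ is an integer polynomial in $a,b,c,d$ evaluated at integer arguments, and therefore an integer, which gives $(Z(A)u)_i\in\alpha_i^{-1}\Z$ as required.

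The delicate step — and the main obstacle — is the cocycle computation for multiplicative closure: it is precisely what motivates the specific choice of denominator $N$ in the definition of $\Gamma_{2n+1}$, and it is the only place where the particular symmetry of $J$ and the pairing $\alpha_i\alpha_{2n+1-i}\in\Z$ really enter. Once this is settled, the remaining verifications are straightforward, modulo being careful about the range of nonzero summation in the formula for $Z(A)_{ij}$.
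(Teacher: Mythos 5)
Your proposal is correct and follows essentially the same route as the paper: the key point in both is the binomial symmetry $\alpha_{2n+1-i}=\alpha_i$ (equivalently $\alpha_i\alpha_{2n+1-i}=\binom{2n-1}{i-1}$, which divides $N$), giving $\la u_1,Ju_2\ra\in\frac1N\Z$, together with the observation that in the formula for $Z(A)_{ij}$ the $\alpha$'s cancel and all exponents are nonnegative on the nonvanishing terms, so $\alpha_i(Z(A)u)_i\in\Z$. You have simply written out in full the computations the paper compresses into ``observe that $\alpha_{2n+1-j}=\alpha_j$'' and ``it is easily checked.''
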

\begin{proof}
Observe that $\alpha_{2n+1-j} = \alpha_j$ for any $j=1,\ldots,2n$. It follows that $J\Lambda_{2n} = \Lambda_{2n}$, and $\la u_1,Ju_2\ra \in \frac1N \Z$ for any $u_1,u_2\in\Lambda_{2n}$. This shows that $\Gamma_{2n+1}$ is a subgroup of $H_{2n+1}$, and clearly $\Gamma_{2n+1}$ is discrete.
It is easily checked that if $A\in\SL(2,\Z)$, then $Z(A)\Lambda_{2n} \subseteq\Lambda_{2n}$. It follows that $\Gamma_{2n+1}$ is invariant under $\SL(2,\Z)$.
\end{proof}

Let $\tilde\SL(2,\R)$ be the universal covering group of $\SL(2,\R)$. The Lie group $\tilde\SL(2,\R)$ is simply connected with a covering homomorphism $\pi\colon\tilde\SL(2,\R)\to\SL(2,\R)$. The kernel of $\pi$ is a discrete normal subgroup of $\tilde\SL(2,\R)$ isomorphic to the group of integers. We let $\tilde\SL(2,\R)$ act on $\R^n$ and $H_{2n+1}$ through $\SL(2,\R)$, and in this way we obtain the semidirect products
$$
\R^n\rtimes\tilde\SL(2,\R) \quad\text{and}\quad H_{2n+1}\rtimes\tilde\SL(2,\R).
$$
We define the subgroup $\tilde\SL(2,\Z)$ of $\tilde\SL(2,\R)$ to be $\tilde\SL(2,\Z) = \pi^{-1}(\SL(2,\Z))$ and obtain the semidirect products
$$
\Z^n\rtimes\tilde\SL(2,\Z) \quad\text{and}\quad \Gamma_{2n+1}\rtimes\tilde\SL(2,\Z).
$$

\begin{lem}\label{lem:proper}
A proper, real algebraic subgroup of $\SL(2,\R)$ is amenable.
\end{lem}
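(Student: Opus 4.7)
The plan is to exploit the low dimension of $\SL(2,\R)$ together with the fact that real algebraic groups have only finitely many connected components. Write $H$ for a proper real algebraic subgroup, and let $H_0$ denote its identity component. First I would argue that $H_0$ is a \emph{proper} connected Lie subgroup of $\SL(2,\R)$: since $\SL(2,\R)$ is connected, $H_0 = \SL(2,\R)$ would force $H = \SL(2,\R)$, contradicting properness.

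Next I would pass to Lie algebras. The Lie algebra $\mf h$ of $H_0$ is a proper Lie subalgebra of $\mf{sl}(2,\R)$, hence has dimension at most two. Every Lie algebra of dimension at most two is solvable (the only possibilities up to isomorphism are the abelian ones and the two-dimensional non-abelian $ax+b$ algebra, both evidently solvable). Consequently $H_0$ is a connected solvable Lie group, and such groups are known to be amenable.

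Finally, I would use the structural fact that a real algebraic group has only finitely many connected components, so $H/H_0$ is a finite group and therefore amenable. Since amenability is preserved under extensions, $H$ is amenable, completing the proof.

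The argument is short and there is no real obstacle; the only point requiring a citation rather than a direct verification is the finiteness of the component group of a real algebraic subgroup, which is a classical theorem of Whitney.
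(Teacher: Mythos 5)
Your argument is correct and coincides with the paper's proof: both pass to the identity component via Whitney's theorem on finiteness of the component group, observe that the Lie algebra is a proper (hence at most two-dimensional, hence solvable) subalgebra of $\mf{sl}(2,\R)$, and conclude amenability of $H$ from solvability of $H^0$ together with finiteness of $H/H^0$. No gaps.
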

\begin{proof}
Let $H$ be a proper, real algebraic subgroup of $\SL(2,\R)$. By a theorem of Whitney \cite[Theorem~3]{MR0095844}, $H$ has only finitely many components (in the usual Hausdorff topology) (see also \cite[Theorem~3.6]{MR1278263}). Hence, it suffices to show that the identity component $H^0$ of $H$ is amenable.

Since $H^0$ is a connected, proper, closed subgroup of $\SL(2,\R)$, its Lie algebra $\mf h$ is a proper Lie subalgebra of $\mf{sl}(2,\R)$. Hence, the dimension of $\mf h$ is at most two, and $\mf h$ must be solvable. So $H^0$ is solvable and, in particular, amenable.
\end{proof}

\begin{lem}
Let $n\geq 2$. For any $x\in\Z^n$ with $x\neq 0$, the stabilizer of $x$ in $\tilde\SL(2,\Z)$ is amenable.
\end{lem}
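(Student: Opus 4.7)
The plan is to reduce the claim, step by step, to the previous Lemma~\ref{lem:proper}. The action of $\tilde\SL(2,\Z)$ on $\Z^n$ factors through $\SL(2,\Z)$ via the covering map $\pi$, so the stabilizer of $x$ in $\tilde\SL(2,\Z)$ is exactly $\pi^{-1}(H)$, where $H$ denotes the stabilizer of $x$ in $\SL(2,\Z)$. Since $\ker\pi \cong \Z$ is amenable and amenability is closed under group extensions, it suffices to show that $H$ itself is amenable. As $H$ is a subgroup of the stabilizer $H_{\R}$ of $x$ in $\SL(2,\R)$ (viewing $\Z^n \subset \R^n$), it is enough to prove that $H_{\R}$ is amenable.

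For this step I would use Lemma~\ref{lem:proper}. The representation of $\SL(2,\R)$ on $\R^n$ is polynomial in the matrix entries of $A \in \SL(2,\R)$, so the condition $A\cdot x = x$ cuts out a real algebraic subgroup of $\SL(2,\R)$. It remains to check that $H_{\R}$ is a proper subgroup. If we had $H_{\R} = \SL(2,\R)$, then the nonzero vector $x$ would be fixed by all of $\SL(2,\R)$, so $\R x$ would be a trivial subrepresentation of the $n$-dimensional irreducible representation. Since $n \geq 2$, this is impossible. Hence $H_{\R}$ is a proper, real algebraic subgroup of $\SL(2,\R)$, and Lemma~\ref{lem:proper} gives amenability of $H_{\R}$.

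Putting the three steps together, $H$ is a subgroup of the amenable group $H_{\R}$, hence amenable, and then $\pi^{-1}(H)$ is an extension of $H$ by $\Z$, hence still amenable, which is the desired conclusion. I do not foresee a genuine obstacle; the only point to handle with a little care is the algebraicity and properness of $H_{\R}$, but both follow immediately from the polynomial nature and the irreducibility (with $n\geq 2$) of the representation of $\SL(2,\R)$ on $\R^n$.
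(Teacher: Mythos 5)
Your proposal is correct and follows essentially the same route as the paper: reduce via the amenable kernel of $\pi$ to the stabilizer in $\SL(2,\Z)$, pass to the stabilizer in $\SL(2,\R)$, and apply Lemma~\ref{lem:proper} after noting it is a proper real algebraic subgroup. Your justification of properness via irreducibility (which is where $n\geq 2$ enters) is in fact slightly more explicit than the paper's.
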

\begin{proof}
The stabilizer in $\tilde\SL(2,\Z)$ is precisely the preimage under $\pi$ of the stabilizer in $\SL(2,\Z)$. Since the kernel of $\pi$ is amenable, and amenability is preserved under extensions, it suffices to show that the stabilizer in $\SL(2,\Z)$ is amenable.

The stabilizer of $x$ in $\SL(2,\R)$ is a real algebraic subgroup. Moreover, since $x\neq 0$, the stabilizer of $x$ is proper, and hence by Lemma~\ref{lem:proper}, the stabilizer of $x$ in $\SL(2,\R)$ is amenable. It follows that the stabilizer in the closed subgroup $\SL(2,\Z)$ is amenable.
\end{proof}

In the following lemma, we consider the action of $\tilde\SL(2,\Z)$ on $\Gamma_{2n+1}$ previously described. Note that the center of $\Gamma_{2n+1}$ is precisely $\{(u,t)\in \Gamma_{2n+1} \mid u = 0\}$.

\begin{lem}
Let $n\geq 1$. For any non-central $x\in \Gamma_{2n+1}$, the stabilizer of $x$ in $\tilde\SL(2,\Z)$ is amenable.
\end{lem}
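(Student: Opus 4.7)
The plan is to mimic the strategy of the previous lemma, now with the representation $Z$ of $\SL(2,\R)$ in place of the standard one. Write $x = (u,t) \in \Gamma_{2n+1}$ with $u \neq 0$, which is exactly the condition that $x$ is non-central. Because the action $\bar Z$ fixes the second coordinate, the stabilizer of $x$ in $\tilde\SL(2,\Z)$ coincides with the stabilizer of $u$ under the induced action, i.e.\ it equals $\pi^{-1}(\mr{Stab}_{\SL(2,\Z)}(u))$, where $\SL(2,\Z)$ acts via $Z$.

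First I would reduce from $\tilde\SL(2,\Z)$ to $\SL(2,\Z)$. Since $\ker\pi \cong \Z$ is amenable and amenability is closed under extensions, it suffices to show that $\mr{Stab}_{\SL(2,\Z)}(u)$ is amenable. Since the latter is a subgroup of $\mr{Stab}_{\SL(2,\R)}(u)$, it is enough to establish amenability of the bigger stabilizer in $\SL(2,\R)$.

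Next I would observe that $\mr{Stab}_{\SL(2,\R)}(u) = \{A \in \SL(2,\R) : Z(A)u = u\}$ is cut out by polynomial equations in the matrix entries of $A$, hence is a real algebraic subgroup. The crucial step is to verify that this stabilizer is a \emph{proper} subgroup of $\SL(2,\R)$: if it were all of $\SL(2,\R)$, then $u$ would span a one-dimensional $\SL(2,\R)$-invariant subspace of $\R^{2n}$, contradicting the irreducibility of the representation $Z$ (which has dimension $2n \geq 2$). Having ensured properness, Lemma~\ref{lem:proper} yields amenability of $\mr{Stab}_{\SL(2,\R)}(u)$, and the reduction above finishes the proof.

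I do not anticipate any real obstacle; the only subtle point is making precise that non-central simply means $u\neq 0$ and that irreducibility of $Z$ in dimension $\geq 2$ immediately forbids a global fixed nonzero vector. Everything else is a direct reuse of the ingredients already assembled.
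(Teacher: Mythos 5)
Your proposal is correct and follows essentially the same route as the paper: reduce from $\tilde\SL(2,\Z)$ to $\SL(2,\Z)$ via the amenable kernel of $\pi$, observe that the stabilizer of $u$ in $\SL(2,\R)$ is a real algebraic subgroup, note it is proper because $u\neq 0$, and invoke Lemma~\ref{lem:proper}. Your explicit appeal to irreducibility of $Z$ to justify properness is a slight elaboration of a step the paper leaves implicit, but the argument is the same.
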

\begin{proof}
As before, it suffices to prove that the stabilizer of $x$ in $\SL(2,\R)$ is amenable. If we write $x = (u,t)\in\Gamma_{2n+1}$, then the stabilizer of $x$ in $\SL(2,\R)$ is
$$
\{A\in\SL(2,\R) \mid Z(A) u = u \}
$$
Clearly, this is a real algebraic subgroup of $\SL(2,\R)$. Moreover, since $u\neq 0$, the stabilizer of $x$ is proper. By Lemma~\ref{lem:proper}, the stabilizer of $x$ in $\SL(2,\R)$ is amenable.
\end{proof}

\begin{proof}[Proof of Theorem~\ref{thm:cover}]
Case of $\R^n\rtimes\tilde\SL(2,\R)$: The group $\Z^n\rtimes\tilde\SL(2,\Z)$ is a closed subgroup of $\R^n\rtimes\tilde\SL(2,\R)$ (in fact a lattice), so it suffices to prove that $\Z^n\rtimes\tilde\SL(2,\Z)$ is not weakly amenable. This is a direct application of Theorem~\ref{thm:ozawa} with $H = \tilde\SL(2,\Z)$, $N = \Z^2$, and $N_0 = \{0\}$.

Case of $H_{2n+1}\rtimes\tilde\SL(2,\R)$: The group $\Gamma_{2n+1}\rtimes\tilde\SL(2,\Z)$ is a closed subgroup of $H_{2n+1}\rtimes\tilde\SL(2,\R)$ (in fact a lattice), so it suffices to prove that $\Gamma_{2n+1}\rtimes\tilde\SL(2,\Z)$ is not weakly amenable. This is a direct application of Theorem~\ref{thm:ozawa} with $H = \tilde\SL(2,\Z)$, $N = \Gamma_{2n+1}$, and $N_0$ equal to the center of $\Gamma_{2n+1}$.
\end{proof}

\begin{proof}[Proof of Theorem~\ref{thm:no-cover}]
Similar to the proof of Theorem~\ref{thm:cover}. One just has to replace $\tilde\SL(2,\Z)$ by $\SL(2,\Z)$.
\end{proof}

\begin{rem}
Note that we have in fact proved that the following discrete groups are not weakly amenable:
\begin{itemize}
	\item $\Z^n\rtimes\SL(2,\Z)$, where $n\geq 2$.
	\item $\Z^n\rtimes\tilde\SL(2,\Z)$, where $n\geq 2$.
	\item $\Gamma_{2n+1}\rtimes\SL(2,\Z)$, where $n\geq 1$.
	\item $\Gamma_{2n+1}\rtimes\tilde\SL(2,\Z)$, where $n\geq 1$.
\end{itemize}
\end{rem}

\section{Simply connected Lie groups}
This section contains the proof of Theorem~\ref{thm:simply-connected}. First we review the structure theory of Lie groups that is needed in the proof, in particular the Levi decomposition (see \cite[Theorem~3.18.13]{MR746308}).

Let $G$ be a connected Lie group with Lie algebra $\mf g$. We denote solvable radical of $\mf g$ by $\mr{rad}(\mf g)$ or $\mf r$. In other words, $\mf r$ the maximal solvable ideal of $\mf g$. There is a semisimple Lie subalgebra $\mf s$ of $\mf g$ such that $\mf g = \mf r\rtimes\mf s$. The semisimple Lie algebra $\mf s$ is a direct sum $\mf s = \mf s_1\oplus\cdots\oplus\mf s_n$ of simple Lie algebras (for some $n\geq 0$). If $R$ and $S$ denote the connected Lie subgroups of $G$ associated with $\mf r$ and $\mf s$, respectively, then $R$ is a closed, normal subgroup of $G$ and $S$ is maximal semisimple but not necessarily closed. Moreover, $G = RS$ as a set. The group $S$, which in general is not unique, is called a semisimple Levi factor. If $G$ is simply connected, then $S$ is closed, $R\cap S = \{1\}$ and $G = R\rtimes S$ as Lie groups.

For a connected, simply connected Lie group $G$, we will prove that the following are equivalent.
\begin{enumerate}
	\item $G$ is weakly amenable.
	\item For every $i = 1,\ldots,n$, one of the following holds:
	\begin{itemize}
		\item $\mf s_i$ has real rank zero;
		\item $\mf s_i$ has real rank one and $[\mf s_i,\mf r] = 0$.
	\end{itemize}
\end{enumerate}

The following proposition can be found in \cite{cornulier-jlt} (see the proof of \cite[Proposition~1.9]{cornulier-jlt}) and essentially appears already in \cite{MR2132866}. Let $\mf v_{n+1}\rtimes\mf{sl}_2$ denote the Lie algebra of $\R^{n+1}\rtimes\SL(2,\R)$ and let $\mf h_{2n+1}\rtimes\mf{sl}_2$ denote the Lie algebra of $H_{2n+1}\rtimes\SL(2,\R)$.
\begin{prop}[\cite{MR2132866},\cite{cornulier-jlt}]\label{prop:dichotomy}
Let $\mf g$ be a Lie algebra with solvable radical $\mf r$ and a Levi decomposition $\mf g = \mf r\rtimes\mf s$. Write $\mf s = \mf s_c\oplus\mf s_{nc}$ by separating compact factors $\mf s_c$ (rank zero) and non-compact factors $\mf s_{nc}$ (positive rank). Exactly one of the following holds.
\begin{itemize}
	\item[(a)] All non-compact factors of $\mf s$ commute with $\mf r$: $[\mf r,\mf s_{nc}] = 0$.
	\item[(b)] $\mf g$ has a subalgebra $\mf h$ isomorphic to $\mf v_{n+1}\rtimes\mf{sl}_2$ or $\mf h_{2n+1}\rtimes\mf{sl}_2$ for some $n\geq 1$, where $\mr{rad}(\mf h)\subseteq\mf r$ and $\mf{sl}_2\subseteq\mf s_{nc}$.
\end{itemize}
\end{prop}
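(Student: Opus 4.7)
The plan is to verify mutual exclusivity (in case (b), the subalgebra $\mf{sl}_2 \subseteq \mf s_{nc}$ does not commute with $\mr{rad}(\mf h) \subseteq \mf r$, so automatically $[\mf s_{nc}, \mf r] \neq 0$) and then to show that the failure of (a) forces (b). So assume $[\mf s_{nc}, \mf r] \neq 0$. Then some noncompact simple factor $\mf s_0 \subseteq \mf s_{nc}$ must act nontrivially on $\mf r$; by simplicity of $\mf s_0$ this action will be faithful. Since $\mf s_0$ has positive real rank, it contains an $\mf{sl}_2$-triple arising from a restricted root, and faithfulness forces this $\mf{sl}_2 \subseteq \mf s_0$ to act nontrivially on $\mf r$.

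Next I would exploit the derived series $\mf r = \mf r^{(0)} \supseteq \mf r^{(1)} \supseteq \cdots \supseteq \mf r^{(N)} = 0$. Each term is characteristic in $\mf r$, hence an ideal of $\mf g$, so $\mf{sl}_2$-invariant. Let $k$ be maximal with $\mf{sl}_2$ acting nontrivially on $\mf r^{(k)}$; then $\mf{sl}_2$ acts trivially on $\mf r^{(k+1)} = [\mf r^{(k)}, \mf r^{(k)}]$. By complete reducibility of finite-dimensional $\mf{sl}_2(\R)$-representations I can pick an $\mf{sl}_2$-invariant complement $W$ with $\mf r^{(k)} = \mf r^{(k+1)} \oplus W$. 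The $\mf{sl}_2$-action on $W$ is nontrivial, so $W$ contains a nontrivial irreducible $\mf{sl}_2$-summand $V$, isomorphic to the unique real irreducible of some dimension $d \geq 2$.

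The last step is to analyze the bracket on $V$. Since $V \subseteq \mf r^{(k)}$ we have $[V, V] \subseteq \mf r^{(k+1)}$, where $\mf{sl}_2$ acts trivially, so the bracket defines an $\mf{sl}_2$-equivariant map $\wedge^2 V \to \mf r^{(k+1)}$ that factors through the invariants $(\wedge^2 V)^{\mf{sl}_2}$. By Clebsch--Gordan this invariant space is zero when $d$ is odd and one-dimensional when $d$ is even. If $[V,V] = 0$, then $V \oplus \mf{sl}_2$ is a subalgebra of $\mf g$ isomorphic to $\mf v_{d} \rtimes \mf{sl}_2$, which (writing $d = n+1$, $n \geq 1$) realizes the first alternative in (b). Otherwise $d = 2n$ is even and $[V,V] =: Z$ is one-dimensional; I would then check that the induced antisymmetric invariant pairing $V \times V \to Z$ is nondegenerate (its kernel would be a proper $\mf{sl}_2$-submodule of the irreducible $V$) and that $[V, Z] = 0$ (because $V \otimes Z \cong V$ has no $\mf{sl}_2$-invariants while $[V,Z]$ sits in the trivial-action region $\mf r^{(k+1)}$). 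These two facts show that $V \oplus Z$ is a copy of the Heisenberg algebra $\mf h_{2n+1}$ inside $\mf r$, and $(V \oplus Z) \oplus \mf{sl}_2 \cong \mf h_{2n+1} \rtimes \mf{sl}_2$ realizes the second alternative in (b).

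The key trick, and the main conceptual obstacle, is locating the right $\mf{sl}_2$-submodule $V$: it must sit in the deepest derived-series stratum on which $\mf{sl}_2$ still acts, so that $[V,V]$ lands in a region with trivial $\mf{sl}_2$-action, making the Clebsch--Gordan decomposition decisive. Once $V$ is in hand, matching the abstract $\mf{sl}_2$-module and bracket data with the concrete realizations of $\mf v_{n+1} \rtimes \mf{sl}_2$ and $\mf h_{2n+1} \rtimes \mf{sl}_2$ fixed in Section~\ref{sec:semidirect} follows from uniqueness of the real irreducibles of $\mf{sl}_2(\R)$ together with uniqueness up to scalar of the invariant alternating form on the even-dimensional ones; since everything lives in $\mf g$ the symplectic form is automatically real, so the Heisenberg algebra produced is genuinely $\mf h_{2n+1}$.
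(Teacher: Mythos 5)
The paper does not actually prove Proposition~\ref{prop:dichotomy}; it imports the statement from \cite{MR2132866} and from the proof of Proposition~1.9 in \cite{cornulier-jlt}, so there is no in-paper argument to compare against. Your proof is correct and complete, and it is in essence the standard argument underlying those references: reduce to a single noncompact simple factor $\mf s_0$ acting faithfully on $\mf r$ (the kernel of the action is an ideal of the simple algebra $\mf s_0$), extract an $\mf{sl}_2$-triple from a restricted root, descend along a characteristic series of $\mf r$ to the last stratum on which $\mf{sl}_2$ acts nontrivially, split off a nontrivial irreducible $V$ by Weyl's theorem, and let Clebsch--Gordan decide whether $[V,V]$ vanishes (odd-dimensional $V$, giving $\mf v_{n+1}\rtimes\mf{sl}_2$) or is a line carrying an invariant symplectic form (even-dimensional $V$, giving $\mf h_{2n+1}\rtimes\mf{sl}_2$). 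The cited sources work with the lower central series of the nilradical rather than the derived series of $\mf r$, but this is an immaterial variation: all you need is a characteristic filtration whose successive brackets drop one level, and your choice delivers that. Two small points worth a sentence in a written-up version, though neither is a gap: the finite-dimensional real irreducibles of $\mf{sl}_2(\R)$ are absolutely irreducible (one of each dimension), which is what lets you apply the complex Clebsch--Gordan multiplicity count to $(\wedge^2 V)^{\mf{sl}_2}$ over $\R$; and in matching $V\oplus Z$ with the concrete $\mf h_{2n+1}$ of Section~\ref{sec:semidirect} you should note that the scalar ambiguity in the invariant alternating form is absorbed by rescaling the identification of $Z$ with the center, exactly as you indicate.
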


\begin{lem}\label{lem:levi-quotient}
Let $G$ be $\R^{n+1}\rtimes\tilde\SL(2,\R)$ or $H_{2n+1}\rtimes\tilde\SL(2,\R)$, where $n\geq 1$. The semisimple Levi factor of $G$ is unique, and if $Z$ is a central subgroup of $G$ contained in the semisimple Levi factor, then $G/Z$ is not weakly amenable.
\end{lem}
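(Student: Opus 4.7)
The plan is to reduce to Theorem~\ref{thm:ozawa} by finding, inside $G/Z$, a discrete subgroup of the same kind used in the proof of Theorem~\ref{thm:cover}. For the uniqueness assertion, I would invoke the Levi--Malcev theorem: any two Levi subalgebras of $\mf g$ are conjugate by $e^{\mr{ad}(h)}$ for some $h$ in the nilpotent radical, so in $G$ any two Levi factors differ by conjugation by an element of the radical. Since conjugation by such an element fixes every element that is central in $G$, a central subgroup of $G$ contained in one Levi factor is contained in every Levi factor; the phrase ``central subgroup of the Levi factor'' is thus intrinsic, and each Levi factor is abstractly isomorphic to $\tilde\SL(2,\R)$.

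The hypothesis then forces $Z\subseteq Z(\tilde\SL(2,\R))=\pi^{-1}(\{\pm I\})\subseteq\tilde\SL(2,\Z)$. Since $Z$ is central in $G$, it acts trivially on the radical $N$ (where $N=\R^{n+1}$ or $H_{2n+1}$), so the quotient reads $G/Z = N\rtimes(\tilde\SL(2,\R)/Z)$. Inside this I would consider $\Gamma\rtimes(\tilde\SL(2,\Z)/Z)$ with $\Gamma=\Z^{n+1}$ in the first case and $\Gamma=\Gamma_{2n+1}$ in the Heisenberg case. Discreteness of $\tilde\SL(2,\Z)/Z$ in $\tilde\SL(2,\R)/Z$ follows from the containment $Z\subseteq\tilde\SL(2,\Z)$: a neighborhood $V$ of the identity in $\tilde\SL(2,\R)$ with $V\cap\tilde\SL(2,\Z)=\{e\}$ projects to a neighborhood in the quotient whose intersection with $\tilde\SL(2,\Z)/Z$ is the identity coset. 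Hence $\Gamma\rtimes(\tilde\SL(2,\Z)/Z)$ is a closed subgroup of $G/Z$.

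Finally, I would apply Theorem~\ref{thm:ozawa} to this subgroup with $H=\tilde\SL(2,\Z)/Z$ and $N_0=\{0\}$ or the center of $\Gamma_{2n+1}$, as appropriate. The group $H$ is non-amenable, since according to whether $Z\subseteq\ker\pi$ or not it surjects onto $\SL(2,\Z)$ or $\mr{PSL}(2,\Z)$, both of which contain free subgroups. Amenability of $\Gamma$ and $H$-invariance of $N_0$ are standard, and for any $x\in\Gamma\setminus N_0$ the stabilizer in $H$ is the image of the stabilizer in $\tilde\SL(2,\Z)$, which the earlier lemmas show to be amenable. Theorem~\ref{thm:ozawa} therefore rules out weak amenability of $\Gamma\rtimes(\tilde\SL(2,\Z)/Z)$, and by~\eqref{eq:subgroup} this rules out weak amenability of $G/Z$. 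The main technical obstacle I expect is ensuring that the lattice-like subgroup remains discrete after the central quotient; this works out precisely because $Z$ already sits inside the discrete subgroup $\tilde\SL(2,\Z)$, so no collapsing occurs when passing from $G$ to $G/Z$.
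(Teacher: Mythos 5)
Your argument is correct, but it takes a genuinely different route from the paper's, in both halves of the lemma. For the quotient statement, the paper argues by cases on $Z$: if $Z$ is trivial it quotes Theorem~\ref{thm:cover}; if $Z$ is nontrivial it notes that $Z$ then has finite index in the infinite cyclic center of $\tilde\SL(2,\R)$, so that $G/Z$ agrees with $\R^{n+1}\rtimes\SL(2,\R)$ or $H_{2n+1}\rtimes\SL(2,\R)$ up to a finite central subgroup, and concludes via Theorem~\ref{thm:no-cover} and the permanence property \eqref{eq:mod-compact}. You instead bypass Theorems~\ref{thm:no-cover} and~\ref{thm:cover} and run the Ozawa criterion (Theorem~\ref{thm:ozawa}) directly on the lattice $\Gamma\rtimes(\tilde\SL(2,\Z)/Z)$ inside $G/Z$; the key points --- that this subgroup stays discrete because $Z\subseteq\pi^{-1}(\{\pm I\})\subseteq\tilde\SL(2,\Z)$, that $\tilde\SL(2,\Z)/Z$ still surjects onto $\SL(2,\Z)$ or $\mathrm{PSL}(2,\Z)$ and so is non-amenable, and that point stabilizers only shrink (indeed $Z$ acts trivially, so they are quotients of the stabilizers in $\tilde\SL(2,\Z)$) --- all check out. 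Your version is uniform in $Z$ and avoids \eqref{eq:mod-compact}, at the cost of essentially re-running the proof of Theorem~\ref{thm:cover}. On the uniqueness assertion the proofs also diverge: the paper deduces literal uniqueness of the Levi factor from conjugacy of Levi factors by elements of $[R,R]$, which is central here, whereas you prove only that a central subgroup of $G$ contained in one Levi factor is contained in all of them. That weaker statement is exactly what the application in Theorem~\ref{thm:simply-connected} requires, and it is arguably the more robust formulation, since the Levi--Mal'cev theorem conjugates Levi subalgebras by $e^{\mr{ad}(x)}$ with $x\in[\mf g,\mf r]$ rather than $x\in[\mf r,\mf r]$; for $\mf v_{n+1}\rtimes\mf{sl}_2$ one has $[\mf g,\mf r]=\mf r$, so the Levi subalgebra itself is not unique. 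If you want your write-up to match the lemma verbatim, you should either address the uniqueness claim directly or note explicitly that you are replacing it with the well-definedness statement you actually prove.
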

\begin{proof}
If $R$ is the solvable radical of $G$, then $[R,R]$ is central in $G$: the commutator group $[R,R]$ is trivial in the first case and in the second case equal to the center of $H_{2n+1}$, which is also central in $H_{2n+1}\rtimes\tilde\SL(2,\R)$. By \cite[Theorem~3.18.13]{MR746308}, any two Levi factors of $G$ are conjugate by an element of $[R,R]$, and hence, in our case, they are actually equal.

The center of $\tilde\SL(2,\R)$ is isomorphic to the group of integers. If $Z$ is the trivial group, we are done by Theorem~\ref{thm:cover}. Otherwise, $Z$ has finite index in the center of $\tilde\SL(2,\R)$, and $G/Z$ is isomorphic up to a finite covering to $\R^{n+1}\rtimes\SL(2,\R)$ or $H_{2n+1}\rtimes\SL(2,\R)$. Then we are done by Theorem~\ref{thm:no-cover} and equation \eqref{eq:mod-compact}.
\end{proof}

\begin{proof}[Proof of Theorem~\ref{thm:simply-connected}]
When $G$ is simply connected, the Levi decomposition expresses $G$ as a semidirect product $G = R\rtimes S$, where $R$ is the solvable radical and $S$ is closed and semisimple (see \cite[Theorem~3.18.13]{MR746308}). Both $R$ and $S$ are simply connected. Decompose the Lie algebra of $S$ into simple ideals $\mf s = \mf s_1\oplus\cdots\oplus\mf s_n$. Recall that two simply connected Lie groups with isomorphic Lie algebras are isomorphic. If $S_i$ is a simply connected Lie group with Lie algebra $\mf s_i$, then $S$ is isomorphic to the direct product $S_1\times\cdots\times S_n$. We split $S$ into the compact factors $S_c$ and non-compact factors $S_{nc}$, $S = S_c \times S_{nc}$.

Assume first (2) holds. Then $S_{nc}$ is a product of simple factors of rank one, so $S_{nc}$ is weakly amenable (see \cite{MR996553}, \cite{MR1079871}). Morevoer, $S_{nc}$ is a direct factor in $G$ and the quotient $G/S_{nc}$ is $R\rtimes S_c$. As $S_c$ is compact and $R$ is solvable, the group $G/S_{nc} = R\rtimes S_c$ is amenable. It follows from \eqref{eq:co-amenable} and \eqref{eq:product} that $G$ is weakly amenable with
$$
\Lambda_\WA(G) = \Lambda_\WA(S_{nc}) = \prod_{i=1}^n \Lambda_\WA(S_i).
$$
For the last equality, we also used the obvious fact that $\Lambda_\WA(S_c) = 1$, since $S_c$ is compact.

Assume next that (2) does not hold. Let $\mf v_{k+1}\rtimes\mf{sl}_2$ denote the Lie algebra of $\R^{k+1}\rtimes\SL(2,\R)$, and let $\mf h_{2k+1}\rtimes\mf{sl}_2$ denote the Lie algebra of $H_{2k+1}\rtimes\SL(2,\R)$.

If some $\mf s_i$ has real rank at least two, then the simple Lie group $S_i$ is not weakly amenable (see \cite[Theorem~1]{MR1418350}), and since $S_i$ is closed in $G$, it follows that $G$ is not weakly amenable. Otherwise some $\mf s_i$ has real rank one, but $[\mf s_i,\mf r]\neq 0$. By Proposition~\ref{prop:dichotomy}, the Lie algebra $\mf g$ contains a subalgebra $\mf h$ isomorphic to $\mf v_{k+1}\rtimes\mf{sl}_2$ or $\mf h_{2k+1}\rtimes\mf{sl}_2$ for some $k\geq 1$, where $\mr{rad}(\mf h)\subseteq\mf r$ and $\mf{sl}_2\subseteq\mf s$.
Hence $G$ contains a Lie subgroup $H$ locally isomorphic to $\R^{k+1}\rtimes\SL(2,\R)$ or $H_{2k+1}\rtimes\SL(2,\R)$. We claim that $H$ is closed and not weakly amenable.

Let $\mf h = \mf r_0\rtimes\mf s_0$ be a Levi decomposition of $\mf h$, that is, $\mf r_0$ is $\mf v_{k+1}$ or $\mf h_{2k+1}$ and $\mf s_0 = \mf{sl}_2$. Let $R_0$ and $S_0$ denote the connected Lie subgroups of $G$ associated with $\mf r_0$ and $\mf s_0$, respectively.

The group $S_0$ is a semisimple connected Lie subgroup of $S$ and hence closed \cite[p.~615]{MR0048464}. Moreover, $S_0$ is locally isomorphic to $\SL(2,\R)$. The group $R_0$ is simply connected and closed in $R$ (see \cite[Theorem~3.18.12]{MR746308}). Clearly, $S_0$ normalizes $R_0$ and $H=R_0S_0$, and since moreover $R\cap S =\{1\}$, we get that $H = R_0\rtimes_\beta S_0$, where $\beta$ denotes the conjugation action of $S_0$ on $R_0$. In particular, $H$ is closed in $G$.

Let $\tilde S_0$ be the universal cover of $S_0$ (so $\tilde S_0 = \tilde\SL(2,\R)$) and consider the semidirect product $\tilde H = R_0\rtimes_\beta\tilde S_0$, where $\tilde S_0$ acts on $R_0$ through the covering $\tilde S_0\to S_0$ and the action of $S_0$ on $R_0$. The group $\tilde H$ is simply connected and hence isomorphic to $\R^{k+1}\rtimes\tilde\SL(2,\R)$ or $H_{2k+1}\rtimes\tilde\SL(2,\R)$. The group $H$ is a quotient of $\tilde H$ by a central subgroup contained in the Levi factor of $\tilde H$, so by Lemma~\ref{lem:levi-quotient} the group $H$ is not weakly amenable. It follows that $G$ is not weakly amenable.
\end{proof}


\end{document}